\documentclass{article}
\usepackage{latexsym,eepic,makeidx,fancybox,amstext,amssymb,amsmath,amsthm,youngtab}
\usepackage[dvips]{graphicx} \usepackage[latin1]{inputenc} \usepackage[english]{babel}
\newtheorem{theorem}{Theorem}
\newtheorem{lemma}{Lemma}

\newcommand{\p}{^\prime}

\newcommand{\mbf}{\mathbf}

\title{Stationary probability of the identity for the TASEP on a ring}
\author{Erik Aas}\date{}
\begin{document} 
\maketitle
\abstract{Consider the following Markov chain on permutations of length $n$. At each time step we choose a random position. If the letter at that position is smaller than the letter immediately to the left (cyclically) then these letters swap positions. Otherwise nothing happens, corresponding to a loop in the Markov chain. This is the circular TASEP. We compute the average proportion of time the chain spends at the identity permutation (and, in greater generality, at sorted words). This answers a conjecture by Thomas Lam \cite{Lam}.}

\section{The totally asymmetric simple exclusion process}

Let $w$ be a finite word on the alphabet $\{1,2,\dots\}$.
By the {\it type} of $w$ we mean the vector $\mbf{m} = \mbf{m}(u) = (m_1,\dots,m_r)$, where $m_i$ is the number of occurrences of the letter $i$ in $w$.
We will consider words with the property that $m_1, m_2, \dots, m_r > 0$, $m_{r+1} = m_{r+2} = \dots = 0$ for some $r > 0$.

Given a vector $\mbf{m} = (m_1,\dots,m_r)$ of positive integers, we define a Markov chain on all words of type $\mbf{m}$, called the $\mbf{m}$-totally asymmetric exclusion process, or $\mbf{m}$-TASEP.
Let $n = m_1+\dots+m_r$.
To describe the transitions in this chain, let $u$ be an arbitrary word of type $\mbf{m}$ (ie. an arbitrary node in the chain).

For each position $i$ in $u$, if the letter at that position is strictly smaller than that immediately to the left (cyclically), then there is a transition from $u$ to the word where these two letters are swapped. The probability of this transition is $\frac{1}{n}$.

Each node $u$ has a loop, which is assigned a probability such that the sum of probabilities of outgoing transitions is $1$.

This defines a Markov chain with a unique stationary distribution, see \cite{MLQ}, which we denote by $\pi$.
More explicitly, this means that 
$k \pi(u) = \sum_{v\to u} \pi(v)$, where $k$ is the number of transitions going from $u$ to some other node, and the sum is over all $v \neq u$ having a transition to $u$.

In his study of reduced expressions of elements of affine Weyl groups in terms of simple generators, Lam \cite{Lam} defined a natural Markov chain on the corresponding hyperplane arrangements. Further, he defined another natural Markov chain on the corresponding finite Weyl group (which is in a certain precise sense a projection of the former one), which for the type A Weyl groups corresponds to the TASEP where there is only one particle of each class, i.e. the nonzero $m_i$'s are all equal to $1$. In this particular case, some properties of the chain can be translated to properties of a large random $N$-core (see \cite{Lam} for details).

The original motivation for the TASEP, however, comes from physics. See \cite{Liggett} for an extensive survey.

In Section \ref{mlqsec} we describe {\it multi-line queues}, introduced by Ferrari and Martin to describe the stationary distribution of the TASEP \cite{MLQ}.

In Section \ref{finishsec} we use multi-line queues to evaluate the stationary distribution at {\it sorted} words.

The TASEP has the following important and well-known property.

\begin{lemma}
\label{probid}
Let $h(u)$ be the word obtained from $u$ by replacing all occurrences of $r$ by $r-1$, and $v$ any word.
Then $\sum_u \pi(u) = \pi(v)$ summing over all words $u$ of some fixed type $\mbf{m}$ such that $h(u) = v$.
\end{lemma}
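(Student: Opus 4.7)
The plan is to show that $h$ is a projection of Markov chains: namely, that applying $h$ to a trajectory of the $\mbf{m}$-TASEP yields a trajectory of the $\mbf{m}'$-TASEP, where $\mbf{m}' = (m_1,\dots,m_{r-2},m_{r-1}+m_r)$. Once this is in hand, the pushforward $\tilde\pi(v) := \sum_{h(u)=v}\pi(u)$ (with $u$ ranging over words of type $\mbf{m}$) is automatically stationary for the $\mbf{m}'$-TASEP, and uniqueness of the stationary distribution (already cited in the paper from \cite{MLQ}) forces $\tilde\pi$ to equal $\pi$ on words of type $\mbf{m}'$, which is exactly the claim.

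To verify the projection property I would couple the two chains by drawing one uniform position $i \in \{1,\dots,n\}$ and attempting the corresponding cyclic swap in both $u$ and $h(u)$. The content of the claim is that the two updates commute with $h$, and this comes down to a case analysis on the pair $(u_{i-1},u_i)$. If $u_{i-1} \leq u_i$, then by the (weak) monotonicity of $h$ also $h(u)_{i-1} \leq h(u)_i$, so neither chain swaps. If $u_{i-1} > u_i$ and $\{u_{i-1},u_i\} \neq \{r-1,r\}$, the strict inequality survives under $h$ and both chains swap compatibly. The only subtle case is $u_{i-1}=r$, $u_i=r-1$: the $\mbf{m}$-TASEP makes a real swap, but the swap only permutes letters inside the fiber of $h$, so the updated word still projects to $h(u)$; meanwhile the $\mbf{m}'$-TASEP sees $h(u)_{i-1}=h(u)_i=r-1$ and loops, so both updated words still lie in the same fiber.

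Because the same uniform position drives both chains and the updates project correctly, the transition probability from any $u$ into the fiber $h^{-1}(v')$ depends only on $h(u)$ and equals the $\mbf{m}'$-TASEP transition probability from $h(u)$ to $v'$. This is the strong (Dynkin) lumpability criterion, so summing the stationarity equations for $\pi$ across each fiber yields the stationarity equations for $\tilde\pi$, and we conclude as above. There is no serious obstacle here; the only point requiring care is the $\{r-1,r\}$ case, where a genuine transition in the large chain must be accounted for as a loop in the projected chain, and the bookkeeping of loop probabilities needs to be handled via the coupling rather than by comparing loop weights directly.
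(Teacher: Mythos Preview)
Your argument is correct and is exactly the approach the paper takes: the paper's one-line proof simply asserts that $h$ furnishes a coupling of the $\mbf{m}$-TASEP with the $\mbf{m}'$-TASEP, and you have spelled out the case analysis and the lumpability conclusion that the paper leaves implicit. There is nothing different in spirit here---your write-up is just a careful expansion of the paper's ``easy to check''.
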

\begin{proof}
	It is easy to check that the map $f$ describes a coupling of the $\mbf{m}$-TASEP and the $\mbf{m}\p$-TASEP, where $\mbf{m}\p$ is the type of $v$.
\end{proof}
\label{mlqsec}

An $\mbf{m}$-multi-line queue, or $\mbf{m}$-MLQ for short, is a rectangular $r \times n$ array, where exactly $m_1+\dots+m_i$ of the entries in row $i$ are marked as {\it boxes}. The other entries are considered empty. These remarkable objects were introduced by Ferrari and Martin to describe the stationary distribution of the TASEP, see \cite{MLQ}.

To any MLQ we associate a labelling of its boxes, as follows. Each box in row $1$ is labelled $1$.

Suppose we have labelled all rows with index less than $i$, and no box in row $i$ is labelled.
We now describe how to label the boxes in row $i$.
First choose any ordering of the boxes in row $i-1$ such that if the label in box $B$ is smaller than that in box $B\p$, then $B$ comes before $B\p$ in the ordering.
Now go through the boxes in row $i-1$ according to this ordering. When examining a box with label $l$, find the first box weakly to the right (cyclically) not already labelled, and label it by $l$.
When all boxes in row $i-1$ have been examined, label the remaining unlabelled boxes in row $i$ (which should be $m_i$ in number) by $i$.
The labelling will not depend on the orderings chosen (as is easy to see).

\section*{Example}

Below is an example of an $(2,1,1,1,3,1,1)$-MLQ.

\vspace{0.5cm}
$
\left|
\begin{matrix}
\fbox{ } &          &          &          &          &          &          & \fbox{ } &          &           \\
\fbox{ } & \fbox{ } &          &          &          & \fbox{ } &          &          &          &           \\
\fbox{ } & \fbox{ } &          &          &          &          &          &          & \fbox{ } & \fbox{ }  \\
\fbox{ } & \fbox{ } & \fbox{ } &          & \fbox{ } &          &          &          & \fbox{ } &           \\
\fbox{ } &          & \fbox{ } & \fbox{ } & \fbox{ } &          & \fbox{ } & \fbox{ } & \fbox{ } & \fbox{ }  \\
\fbox{ } & \fbox{ } & \fbox{ } & \fbox{ } &          & \fbox{ } & \fbox{ } & \fbox{ } & \fbox{ } & \fbox{ }  \\
\fbox{ } & \fbox{ } & \fbox{ } & \fbox{ } & \fbox{ } & \fbox{ } & \fbox{ } & \fbox{ } & \fbox{ } & \fbox{ }  \\
\end{matrix}
\right|
$

\vspace{0.5cm}
\noindent The associated labelling is given by
\vspace{0.5cm}

$
\left|
\begin{matrix}
\fbox{1} &          &          &          &          &          &          & \fbox{1} &          &           \\
\fbox{1} & \fbox{1} &          &          &          & \fbox{2} &          &          &          &           \\
\fbox{1} & \fbox{1} &          &          &          &          &          &          & \fbox{2} & \fbox{3}  \\
\fbox{1} & \fbox{1} & \fbox{3} &          & \fbox{4} &          &          &          & \fbox{2} &           \\
\fbox{1} &          & \fbox{1} & \fbox{3} & \fbox{4} &          & \fbox{5} & \fbox{5} & \fbox{2} & \fbox{5}  \\
\fbox{1} & \fbox{6} & \fbox{1} & \fbox{3} &          & \fbox{4} & \fbox{5} & \fbox{5} & \fbox{2} & \fbox{5}  \\
\fbox{1} & \fbox{6} & \fbox{1} & \fbox{3} & \fbox{7} & \fbox{4} & \fbox{5} & \fbox{5} & \fbox{2} & \fbox{5}  \\
\end{matrix}
\right|.
$
\vspace{0.5cm}

The bottom row of a $\mbf{m}$-MLQ thus consists of a contiguous string of boxes, whose labels form a word $u$ of type $\mbf{m}$.
We say that the MLQ {\it represents} $u$. Denote the number of MLQ's representing $u$ by $[u]$.\footnote[1]{This notation collides with the notation $[n] = \{1,\dots,n\}$, though not in any serious way.}
The number of $\mbf{m}$-MLQ's is clearly $Z_{\mbf{m}} := \sum_u [u] = \prod_{i=1}^r {n \choose m_1+\dots+m_i}$.

\begin{theorem}
\label{mlq}
[Ferrari and Martin, \cite{MLQ}]
For any word $u$, $\pi(u) = [u] / Z_{\mbf{m}}$, where $\mbf{m}$ is the type of $u$.
\end{theorem}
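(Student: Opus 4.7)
The plan is to prove the theorem by constructing a coupled (lifted) Markov chain on $\mbf{m}$-multi-line queues with two key properties: (a) its unique stationary distribution is uniform on the set of $\mbf{m}$-MLQs, and (b) the projection that sends an MLQ to its bottom-row word is a Markov projection onto the $\mbf{m}$-TASEP. Granting (a) and (b), the pushforward of the uniform measure under the projection is a stationary measure for the TASEP; by uniqueness this pushforward equals $\pi$, and so $\pi(u) = [u]/Z_{\mbf{m}}$.

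For the lifted chain, at each step I choose a column index $i \in \{1,\ldots,n\}$ uniformly at random. I then apply a row-by-row update, starting from the top. In the top row, if column $i$ contains a box and column $i-1$ (cyclically) is empty, I move the box leftward; otherwise nothing happens. For each subsequent row, I apply a local update at the pair of columns $(i-1, i)$ whose rule is dictated by the labeling algorithm: if the box in column $i$ carries a strictly smaller label than the box (if any) in column $i-1$, the two are swapped, and the effect is propagated downward through the rows. The crucial design feature is that the net effect on the bottom-row word is precisely the TASEP swap at position $i$ whenever $u_i < u_{i-1}$, and is the identity otherwise.

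Two things then remain to verify. Uniform stationarity in (a) is established by exhibiting each step as a bijection on the set of MLQs: once one conditions on the column choice $i$, the row-by-row update is invertible, so the lifted transition matrix is doubly stochastic with the $1/n$ weights. The projection property (b) is the main obstacle: one must trace inductively in the row index how the column-$i$ update propagates through the labeling, and check via a finite case analysis on the local configuration of boxes near columns $i-1$ and $i$ (across all rows and with their inherited labels) that the bottom-row label sequence transitions by a nearest-neighbor TASEP swap when applicable and by a loop otherwise. This case analysis is the nontrivial combinatorial content, and rests on a careful understanding of how the labeling algorithm responds to a single-column perturbation higher in the diagram. Once (a) and (b) are in place, the theorem follows immediately from the projection argument sketched above.
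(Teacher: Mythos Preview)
The paper does not supply its own proof of this theorem: it is stated with attribution to Ferrari and Martin and simply cited from \cite{MLQ}. There is therefore nothing in the paper to compare your proposal against.

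That said, your proposal is a faithful high-level outline of the original Ferrari--Martin argument: one lifts the TASEP to a Markov chain on MLQs whose transition, conditioned on the chosen column, is a bijection (hence the chain is doubly stochastic and its stationary law is uniform), and then checks that the bottom-row word evolves as the TASEP. Your description of the row-by-row update is somewhat imprecise --- in the actual construction the ``bell'' at column $i$ triggers a service in each queue in turn, and the column at which the swap occurs can drift as one passes from row to row, rather than remaining fixed at $(i-1,i)$ --- but you correctly identify both where the real work lies (the case analysis verifying the projection property) and why the conclusion then follows (pushforward of the uniform law plus uniqueness of the TASEP stationary distribution).
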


Using this theorem, it is not too difficult (compared to, say, solving the linear equations defining the stationary distribution) to work out the values $[u]$ for words of type $(1,1,1,1)$. They are listed below. Note that for any cyclic shift $u\p$ of $u$, we have $[u\p] = [u]$.

\begin{verbatim}
[1234] = 9
[1243] = 3
[1324] = 3
[1342] = 3
[1423] = 5
[1432] = 1
\end{verbatim}

\noindent
The MLQ's corresponding to $[1423] = 5$ are the following ones.

\vspace{0.5cm}
\noindent
$
\left|
\begin{matrix}
\fbox{1} &          &          &          \\
\fbox{1} &          & \fbox{2} &          \\
\fbox{1} &          & \fbox{2} & \fbox{3} \\
\fbox{1} & \fbox{4} & \fbox{2} & \fbox{3} \\
\end{matrix}
\right|
$
$
\left|
\begin{matrix}
         &          &          & \fbox{1} \\
\fbox{1} &          & \fbox{2} &          \\
\fbox{1} &          & \fbox{2} & \fbox{3} \\
\fbox{1} & \fbox{4} & \fbox{2} & \fbox{3} \\
\end{matrix}
\right|
$
$
\left|
\begin{matrix}
\fbox{1} &          &          &          \\
\fbox{1} & \fbox{2} &          &          \\
\fbox{1} &          & \fbox{2} & \fbox{3} \\
\fbox{1} & \fbox{4} & \fbox{2} & \fbox{3} \\
\end{matrix}
\right|
$
\\
\vspace{0.5cm}
\\
$
\left|
\begin{matrix}
         &          &          & \fbox{1} \\
\fbox{1} & \fbox{2} &          &          \\
\fbox{1} &          & \fbox{2} & \fbox{3} \\
\fbox{1} & \fbox{4} & \fbox{2} & \fbox{3} \\
\end{matrix}
\right|
$
$
\left|
\begin{matrix}
         &          & \fbox{1} &          \\
\fbox{1} & \fbox{2} &          &          \\
\fbox{1} &          & \fbox{2} & \fbox{3} \\
\fbox{1} & \fbox{4} & \fbox{2} & \fbox{3} \\
\end{matrix}
\right|
$
\vspace{0.5cm}

One can similarly compute $[{\tt 1233}]=3$. Below, we will generalize the following (arithmetic) identities: $[{\tt 1234}] = {2 + 1 \choose 1}[{\tt 1233}]$, $[{\tt 1234}] + [{\tt 1243}] = {4 \choose 1}[{\tt 1233}]$ and $[{\tt 1243}] = [{\tt 1233}]$.

\section{Sorted words}
\label{finishsec}

In this section we compute the stationary probability of any {\it sorted} word (such as {\tt 1234} or {\tt 112334}).

Let us fix $r\geq 3$, and a word $u$ on $\{1,\dots,r-2\}$ of length $s$.
For notational convenience we will identify the letters $r-1$ and $r$ with $\alpha$ and $\beta$ respectively.

\begin{lemma}
\label{ba_to_aa}
Let $w$ be any word in the alphabet $[r]$. Then $[\beta\alpha w] = [\alpha\alpha w]$.
\end{lemma}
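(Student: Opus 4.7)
The plan is to construct an explicit bijection between MLQ's representing $\beta\alpha w$ and those representing $\alpha\alpha w$.

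I would start with one general property of MLQ's: the bumping from row $r-1$ to row $r$ is the identity map, because the bottom row consists of all $n$ positions. A short induction on the label order shows that when the label $l$ at position $q_l$ bumps, all earlier bumps sit at the distinct positions $q_1,\dots,q_{l-1}$, so $q_l$ is still unlabeled in row $r$ and is chosen as the target. As a corollary, the empty positions of row $r-1$ coincide with the positions of the letter $r$ in the bottom word. Applied to an MLQ $M$ for $\beta\alpha w$ this gives $B^M_{r-1}=[n]\setminus L$ with $L=\{1\}\cup\{i+2:w_i=\beta\}$, while for an MLQ $M\p$ for $\alpha\alpha w$ it gives $B^{M\p}_{r-1}=B^M_{r-1}\cup\{1\}$. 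This comparison suggests the map $M\mapsto M\p$: keep rows $1,\dots,r-2$ identical, adjoin position $1$ to row $r-1$, and (if $w$ contains no $\beta$) discard the now-superfluous row $r$; otherwise leave row $r$ unchanged.

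To verify that $M\p$'s labeling produces $\alpha\alpha w$ it suffices to show that the row $r-2$-to-row $r-1$ bumping agrees in $M$ and $M\p$ on $B^M_{r-1}$, since then position $1$ in $M\p$ is a fresh leftover that is assigned the new label $r-1=\alpha$, and the bottom row of $M\p$ is then computed by a second application of identity bumping. This agreement is equivalent to saying that no bump in $M$ wraps cyclically past position $n$, because only such a wrap would cause the corresponding bump in $M\p$ to divert to position $1$.

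The main obstacle is this no-wrap step, which I would settle using position $2$. By identity bumping at row $r$, position $2$ carries the label $\alpha$ in $M$'s row $r-1$; since $\alpha=r-1$ is precisely the new label introduced at row $r-1$, position $2$ is a leftover of the row $r-2$ bumping and hence remains unlabeled throughout that process. If some bump from a position $q\ge 3$ wrapped, it would skip position $1$ (not in $B^M_{r-1}$) and be caught by the still-unlabeled position $2$, forcing a label $l\le r-2$ onto position $2$ and contradicting its eventual label $\alpha$. A routine induction on the label order then yields the equality of bumping maps $f_{M\p}=f_M$, after which the rest of the labeling calculation is direct. Invertibility is symmetric: for any MLQ $M\p$ of $\alpha\alpha w$, position $1$ lies in $B^{M\p}_{r-1}$ and must carry label $\alpha$ by identity bumping, so it can be removed from row $r-1$ without disturbing other bumps, and in the case $w$ has no $\beta$ one reinstates the full row $r=[n]$ to recover $M$.
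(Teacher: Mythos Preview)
Your bijection---add a box in column $1$ of row $r-1$ (and drop row $r$ when $m_r=1$)---is exactly the paper's; the paper merely asserts that the entries $X$ (column $1$, row $r-2$) and $Y$ (column $1$, row $r-1$) are forced to be empty and that the relabelling is ``easily checked'', whereas you actually supply the identity-bumping and no-wrap details. One small remark: your claimed equivalence ``agreement on $B^M_{r-1}$ $\Leftrightarrow$ no bump in $M$ wraps past $n$'' silently assumes there is no box in column $1$ of row $r-2$, since such a box would divert to the new column-$1$ box in $M\p$ without any wrapping; this is precisely the paper's observation that $X$ is empty, and it follows immediately from the same position-$2$ barrier you already use (a box in column $1$ or $2$ of row $r-2$ would label position $2$ with something $\le r-2$).
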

\begin{proof}
Consider any MLQ counting $[\beta\alpha w]$. The last three rows look like this:

$
\left|
\begin{matrix}
X & & \dots \\
Y & \fbox{$\alpha$} & \dots \\
\fbox{$\beta$} & \fbox{$\alpha$} & \dots \\
\end{matrix}
\right|.
$

The positions marked X and Y have to be empty from boxes by the definition of the labelling of the MLQ. Let us make the following simple change in the MLQ:

$
\left|
\begin{matrix}
& & \dots \\
\fbox{$\alpha$} & \fbox{$\alpha$} & \dots \\
\fbox{$\alpha$} & \fbox{$\alpha$} & \dots \\
\end{matrix}
\right|.
$

The operation just described is easily checked to be a bijection between the MLQs counting $[\beta\alpha w]$ and the MLQs counting $[\alpha\alpha w]$.
\end{proof}
 
For any $b \geq 0$, let $E_b$ denote the set of words in $\{\alpha, \beta\}$ of length $n-s$ with $b$ occurrences of $\beta$ (and thus $n-s-b$ occurrences of $\alpha$).

We denote the unique sorted word in $E_{b}$ by $e^{(b)}$. For a word $v \in E_b$, denote by $f(v)$ the word obtained by changing all non-trailing occurrences of $\beta$ into $\alpha$
(example:$f(\beta \alpha\beta \beta \alpha\alpha\beta\beta\beta)
					 =\alpha\alpha\alpha\alpha\alpha\alpha\beta\beta\beta$) and by $g(v)$ the number of occurrences of $\beta$ in $f(v)$ ($3$ in the example).

Thus $f(v) = e^{(g(v))}$. The number of $v\in E_b$ satisfying $g(v) = k$ is clearly ${n-s-k-1\choose b-k}$.

Let us state an immediate consequence of Lemma \ref{ba_to_aa}.

\begin{lemma}
For any $v\in E_b$, we have $[uv] = [uv\p]$, where $v\p=f(v)$.
\end{lemma}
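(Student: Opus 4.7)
The plan is to iterate Lemma \ref{ba_to_aa}, one $\beta\alpha\to\alpha\alpha$ swap at a time, using the cyclic invariance of $[\cdot]$ (which follows from Theorem \ref{mlq} together with the fact that the TASEP lives on a ring, so $\pi$ is invariant under cyclic shifts). Since $u$ contains neither $\alpha$ nor $\beta$, every $\alpha$ and $\beta$ in $uv$ already lies inside the block $v$, and any cyclically adjacent $\beta\alpha$ pair in $uv$ must come from two adjacent positions of $v$.

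I would induct on $N(v) := b - g(v)$, the number of \emph{non-trailing} occurrences of $\beta$ in $v$. The base case $N(v)=0$ gives $v = e^{(b)} = f(v)$, so the statement is trivial. For the inductive step, let $k$ be the largest index in $v$ with $v_k = \beta$ that is followed by some $\alpha$. Then $v_{k+1}$ cannot be $\beta$ (otherwise $k+1$ would be a later non-trailing $\beta$), and $k+1$ cannot lie past the end of $v$ (since some $\alpha$ occurs after position $k$), so $v_{k+1} = \alpha$. Thus $v$ always contains an honest adjacent $\beta\alpha$ pair whenever $N(v) > 0$.

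Cyclically rotating $uv$ to bring this $\beta\alpha$ to positions $1,2$, applying Lemma \ref{ba_to_aa}, and rotating back yields a word $u\tilde v$ with $\tilde v$ obtained from $v$ by replacing $v_k v_{k+1} = \beta\alpha$ with $\alpha\alpha$; by cyclic invariance, $[uv] = [u\tilde v]$. Since the swap happens strictly before the last $\alpha$ of $v$, the trailing run of $\beta$'s in $v$ is untouched, hence $g(\tilde v) = g(v)$ and $\tilde v \in E_{b-1}$ with $N(\tilde v) = N(v) - 1$. The inductive hypothesis gives
\[
[u\tilde v] \;=\; [u\, f(\tilde v)] \;=\; \bigl[u\, e^{(g(v))}\bigr] \;=\; [u\, f(v)] \;=\; [uv\p],
\]
closing the induction.

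The only points requiring care are the two bookkeeping claims just used: the existence of an adjacent $\beta\alpha$ pair in $v$ whenever $N(v)>0$, and the fact that $g$ is preserved under the swap (so that $f(\tilde v) = f(v)$). Both are immediate positional arguments, so there is no substantial obstacle; the content of the lemma is essentially that Lemma \ref{ba_to_aa}, combined with cyclic invariance of $[\cdot]$, is strong enough to flatten every non-trailing $\beta$ in $v$ into an $\alpha$ without changing the MLQ count.
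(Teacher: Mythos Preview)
Your proof is correct and is exactly the kind of argument the paper has in mind: the paper states the lemma as ``an immediate consequence of Lemma~\ref{ba_to_aa}'' without further detail, and your iteration of Lemma~\ref{ba_to_aa} via cyclic shifts (which the paper explicitly notes preserve $[\cdot]$) is the natural way to unpack that phrase. The bookkeeping on $g(\tilde v)=g(v)$ and the existence of an adjacent $\beta\alpha$ whenever $N(v)>0$ is handled cleanly.
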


\begin{lemma}
We have $\sum_{v\in E_b} [uv] = [ue^{(0)}] = {n\choose b}[ue^{(0)}]$
\end{lemma}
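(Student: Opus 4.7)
The plan is to apply Lemma \ref{probid} with target word $ue^{(0)}$, then convert from stationary probabilities to MLQ counts using Theorem \ref{mlq}. The claim is then just the bookkeeping of one binomial ratio; I read the equation as $\sum_{v\in E_b}[uv] = \binom{n}{b}[ue^{(0)}]$.

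First, I would identify the fiber of $h$ over $ue^{(0)}$ within the set of words of type $\mathbf{m} = (m_1,\dots,m_{r-2}, n-s-b, b)$, where $m_i$ is the number of occurrences of $i$ in $u$. Because $h$ fixes every letter $<r$ and sends $\beta \mapsto \alpha$, any word $w$ of type $\mathbf{m}$ with $h(w)=ue^{(0)}$ must agree with $ue^{(0)}$ in positions whose target letter is $\leq r-2$; in particular, $w$ begins with $u$. The remaining $n-s$ positions of $w$ must then contain exactly $b$ copies of $\beta$ and $n-s-b$ copies of $\alpha$, so they form an element of $E_b$. Conversely, every $uv$ with $v \in E_b$ satisfies $h(uv)=ue^{(0)}$ and has type $\mathbf{m}$. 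Lemma \ref{probid} therefore yields
$$\sum_{v\in E_b} \pi(uv) = \pi(ue^{(0)}).$$

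Next, I would use Theorem \ref{mlq} to rewrite each $\pi$ as $[\cdot]/Z$, producing $Z_\mathbf{m}^{-1}\sum_{v\in E_b}[uv] = Z_{\mathbf{m}^0}^{-1}[ue^{(0)}]$, where $\mathbf{m}^0=(m_1,\dots,m_{r-2},n-s)$ is the type of $ue^{(0)}$. Using the product formula $Z=\prod_i\binom{n}{m_1+\cdots+m_i}$, the two normalizations share every factor corresponding to $i\leq r-2$ and each terminates in a trailing $\binom{n}{n}=1$; the only surplus factor in $Z_\mathbf{m}$ comes from the partial sum $m_1+\cdots+m_{r-1}=n-b$, contributing $\binom{n}{n-b}=\binom{n}{b}$. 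Multiplying through by $Z_\mathbf{m}$ gives the claimed identity.

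The entire argument is conceptually straightforward once the fiber description is in hand; the only detail requiring care is the computation of $Z_\mathbf{m}/Z_{\mathbf{m}^0}$, where one must remember to line up the binomial products correctly, noting that $\mathbf{m}^0$ is effectively a length-$(r-1)$ type vector while $\mathbf{m}$ has length $r$.
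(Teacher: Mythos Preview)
Your proposal is correct and follows exactly the paper's approach: apply Lemma~\ref{probid} to identify the fiber over $ue^{(0)}$, convert via Theorem~\ref{mlq} to MLQ counts, and compute the normalization ratio $Z_{\mathbf{m}}/Z_{\mathbf{m}^0}=\binom{n}{b}$. You simply spell out in more detail what the paper compresses into two sentences, in particular the verification that the fiber is precisely $\{uv:v\in E_b\}$ and the alignment of the binomial products.
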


\begin{proof}
Let $\mbf{m}_1$ be the type of the words $uv$ for $v\in E_b$, and $\mbf{m}_2$ be the type of $uv^{(0)}$.
According to Lemma \ref{probid} we have (replacing $\pi(\cdot)$ by $[\cdot]$ by using Theorem \ref{mlq})
$\frac{1}{Z_{\mbf{m}_1}} \sum_{v\in E_b} [uv] = \frac{1}{Z_{\mbf{m}_2}} [ue^{(0)}]$.
Since $Z_{\mbf{m}_1} / Z_{\mbf{m}_2} = {n\choose b}$, we are done.
\end{proof}

For the proof of Lemma \ref{final}, we will need the following classical identity for the binomial coefficients.
\begin{lemma}
For any $n,b,s\geq 0$, 
${n \choose b} = \sum_{k=0}^b {n-s-k-1\choose b-k}{s+k\choose s}$.
\end{lemma}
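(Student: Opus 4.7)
The plan is to give a short combinatorial proof. Both sides count binary words of length $n$ in the alphabet $\{\alpha, \beta\}$ with exactly $b$ letters $\beta$; on the left hand side this is the definition of ${n \choose b}$.

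For the right hand side, given such a word $w$, I would classify $w$ by the statistic $k = k(w)$ equal to the number of $\beta$'s appearing strictly before the $(s+1)$-th $\alpha$ of $w$, so that $k$ ranges over $\{0, 1, \ldots, b\}$. For a fixed value of $k$, the $(s+1)$-th $\alpha$ occupies position $s+k+1$; the prefix of length $s+k$ is an arbitrary arrangement of $s$ copies of $\alpha$ and $k$ copies of $\beta$, contributing ${s+k \choose s}$ choices, while the suffix of length $n-s-k-1$ contains the remaining $b-k$ occurrences of $\beta$, contributing ${n-s-k-1 \choose b-k}$ choices. Summing over $k$ gives exactly the claimed identity.

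The one delicate point — the main obstacle — is that the statistic $k(w)$ is only well-defined when $w$ contains at least $s+1$ copies of $\alpha$, i.e.\ when $n-b \geq s+1$. In the use of this lemma in the proof of Lemma \ref{final} this inequality will hold, so one may stop here. To handle all $n, b, s\geq 0$ uniformly, I would observe that both sides are polynomials in $n$ of degree $b$ (for fixed $s, b$) agreeing on infinitely many values, hence on all of them. Alternatively, one can give a one-line generating-function proof: by the negative binomial series, the right hand side is the coefficient of $x^b$ in $(1-x)^{-(s+1)} \cdot (1-x)^{-(n-s-b)} = (1-x)^{-(n-b+1)}$, which equals ${n \choose b}$.
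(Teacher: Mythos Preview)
Your proof is correct and follows essentially the same combinatorial idea as the paper: both partition the $b$-subsets $\sigma\subseteq[n]$ (equivalently, your binary words) according to the position $s+k+1$ of the $(s+1)$-th element \emph{not} in $\sigma$ (your $(s+1)$-th $\alpha$), yielding $\binom{s+k}{k}$ choices for the prefix and $\binom{n-s-k-1}{b-k}$ for the suffix. You are in fact more careful than the paper in flagging that this bijection requires $n-b\ge s+1$ and in extending the identity to all $n,b,s\ge 0$ via the polynomial/generating-function argument; the paper tacitly assumes the relevant range.
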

\begin{proof}
Let $A_k = \{\sigma\subseteq[n]:|\sigma|=b$, $|[s+k]\cap\sigma| = k$, and $s+k+1\in\sigma\}$.
Then ${[n]\choose b}=\bigcup_{k=0}^b A_k$. The size $|A_k|$ is precisely ${n-s-k-1\choose b-k}{s+k\choose k}$.
\end{proof}

\begin{lemma}
\label{final}
For any $b \geq 0$, $[ue^{(b)}] = {s+b\choose s}[ue^{(0)}]$.
\end{lemma}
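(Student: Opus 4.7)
The plan is to prove this by induction on $b$, using the previous two lemmas to set up a recurrence for $[ue^{(b)}]$, and then using the binomial identity to close the recursion cleanly.

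The base case $b=0$ is trivial. For the inductive step, I would first rewrite the sum in the previous lemma by sorting the words $v\in E_b$ according to the value of $g(v)$. By the immediate consequence of Lemma \ref{ba_to_aa} (which says $[uv]=[uf(v)]=[ue^{(g(v))}]$), and using that the number of $v\in E_b$ with $g(v)=k$ is ${n-s-k-1\choose b-k}$, we obtain
\[
\sum_{k=0}^{b} {n-s-k-1\choose b-k}\,[ue^{(k)}] \;=\; {n\choose b}\,[ue^{(0)}].
\]
The term $k=b$ isolates $[ue^{(b)}]$ with coefficient ${n-s-b-1\choose 0}=1$, so
\[
[ue^{(b)}] \;=\; {n\choose b}[ue^{(0)}] \;-\; \sum_{k=0}^{b-1}{n-s-k-1\choose b-k}[ue^{(k)}].
\]

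Now I would apply the inductive hypothesis $[ue^{(k)}] = {s+k\choose s}[ue^{(0)}]$ for $k<b$ to the remaining terms, factor out $[ue^{(0)}]$, and use the binomial identity
\[
{n\choose b} \;=\; \sum_{k=0}^{b}{n-s-k-1\choose b-k}{s+k\choose s}
\]
to collapse the expression. Isolating the $k=b$ summand of that identity gives
\[
{n\choose b} - \sum_{k=0}^{b-1}{n-s-k-1\choose b-k}{s+k\choose s} \;=\; {s+b\choose s},
\]
which is exactly the coefficient we want in front of $[ue^{(0)}]$.

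I do not expect any serious obstacle: the statement, the preceding lemmas, and the specific form of the binomial identity have been arranged precisely to make this induction work. The only subtle point to verify carefully is that the $k=b$ contribution really does isolate $[ue^{(b)}]$ with coefficient $1$ (i.e.\ that ${n-s-b-1\choose 0}=1$, which holds provided $n-s-b-1\geq 0$; when $b=n-s$ the word $e^{(b)}$ is all $\beta$'s and $E_b$ has a single element, so the identity holds trivially).
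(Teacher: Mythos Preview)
Your proposal is correct and follows essentially the same argument as the paper's proof: induction on $b$, grouping $\sum_{v\in E_b}[uv]$ by the value of $g(v)$, isolating the $k=b$ term, applying the inductive hypothesis, and closing with the binomial identity. Your extra care about the coefficient ${n-s-b-1\choose 0}$ and the boundary case $b=n-s$ is a nice touch that the paper leaves implicit.
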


\begin{proof}
We induct on $b$, the case $b = 0$ being clear.

By picking out the $k = b$ term from the sum in the right hand side of the following equation,

$$
{n\choose b}[ue^{(0)}] =
\sum_{v\in E_b}[uv] =
\sum_{k=0}^b \sum_{v\in E_b: g(v) = k}[uv] =
$$

$$
\sum_{k=0}^b \sum_{v\in E_b: g(v) = k}[ue^{(k)}] =
\sum_{k=0}^b {n-s-k-1 \choose b-k}[ue^{(k)}],
$$ 

we obtain
$$
[ue^{(b)}] = \sum_{v\in E_b} [uv] - \sum_{k=0}^{b-1}{n-s-k-1 \choose b-k}[ue^{(k)}] =
$$

$$
[ue^{(0)}]\left( {n \choose b} - \sum_{k=0}^{b-1} {n-s-k-1\choose b-k}{s+k\choose s} \right) =
[ue^{(0)}]{s+b\choose b},$$ completing the induction.
\end{proof}

By applying Lemma \ref{final}, $r-2$ times, we obtain the following
\begin{theorem}
\label{finish}
Let $w$ denote the sorted word on $\{1,\dots,r\}$ with $m_i$ occurrences of the letter $i$. (i.e. $w = 1^{m_1}2^{m_2}\dots r^{m_r}$).
Then $$[w] = \prod_{i=2} ^{r-1} {n-m_i \choose m_1+\dots+m_{i-1}}.$$
\end{theorem}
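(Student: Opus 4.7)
The plan is to iterate Lemma~\ref{final}, at each step peeling off the topmost letter class from the sorted word. To launch the iteration, in the notation of Section~\ref{finishsec} I take $\alpha = r-1$, $\beta = r$, $u = 1^{m_1} 2^{m_2}\cdots (r-2)^{m_{r-2}}$ of length $s = m_1+\cdots+m_{r-2}$, and $b = m_r$, so that $w = u\cdot e^{(m_r)}$. Applying Lemma~\ref{final} produces the factor $\binom{s+m_r}{s} = \binom{n-m_{r-1}}{m_1+\cdots+m_{r-2}}$ and leaves $[u\,e^{(0)}]$, where $u\,e^{(0)} = 1^{m_1}\cdots(r-2)^{m_{r-2}}(r-1)^{m_{r-1}+m_r}$ is again a sorted word, now on the alphabet $\{1,\dots,r-1\}$ with the top two original classes merged.

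I would then repeat the procedure: at step $k$ (for $1 \le k \le r-2$) apply Lemma~\ref{final} to a sorted word on $\{1,\dots,r-k+1\}$ whose two largest letters play the roles of $\alpha$ and $\beta$. The parameters at step $k$ become $s_k = m_1+\cdots+m_{r-k-1}$ and $b_k = m_{r-k+1}+\cdots+m_r$, so that $s_k+b_k = n-m_{r-k}$ and the step contributes the factor $\binom{n-m_{r-k}}{m_1+\cdots+m_{r-k-1}}$. As $k$ ranges over $1,\dots,r-2$ the substitution $i := r-k$ runs over $r-1,r-2,\dots,2$, so the accumulated factors are exactly $\prod_{i=2}^{r-1}\binom{n-m_i}{m_1+\cdots+m_{i-1}}$, matching the product asserted in the theorem.

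After $r-2$ such applications the remaining MLQ count is that of the two-letter sorted word $1^{m_1}2^{n-m_1}$, so it remains to verify the base case $[1^{m_1}2^{n-m_1}] = 1$. This is the one point not formally covered by Lemma~\ref{final} (whose proof uses the last three rows of an MLQ and so assumes $r \ge 3$), and it is where I would place the main technical care. Concretely, I would inspect two-row MLQs directly: row~$2$ is fully boxed, and a short analysis of the labelling rule shows that any placement of the $m_1$ row-$1$ boxes other than $\{1,2,\dots,m_1\}$ forces a cyclic wrap-around in the assignment to row~$2$ that destroys sortedness of the bottom row. The canonical placement is therefore unique, giving $[1^{m_1}2^{n-m_1}] = 1$ and completing the proof.
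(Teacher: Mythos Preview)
Your proposal is correct and follows exactly the paper's approach: apply Lemma~\ref{final} a total of $r-2$ times, each time merging the top letter class into the one below it, and your bookkeeping of $s_k$, $b_k$, and the resulting binomial factors is accurate. The only quibble is in the base case explanation: when row~$2$ is fully boxed there is no cyclic wrap-around---each row-$1$ box at position $j$ simply labels position $j$ in row~$2$---so the $1$'s in the bottom row sit exactly at the row-$1$ box positions, and sortedness forces those to be $\{1,\dots,m_1\}$; your conclusion $[1^{m_1}2^{\,n-m_1}]=1$ is nonetheless correct.
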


Letting $m_1 = \dots = m_r = 1$ (and thus $r = n$), this answers Conjecture 1 in \cite{Lam} affirmatively.

In fact, the same reasoning used to prove Theorem \ref{finish} can be used to express any bracket $[uv]$, where $v$ is a sorted word all of whose letters are greater than all letters in $u$, in terms of brackets of 'simpler' words.
Using the (well-known) lemma below, a similar remark can be made for brackets $[uv]$ where $v$ is a sorted word all of whose letters are smaller than all letters in $u$.

\begin{lemma}
Let $w$ be a word on the alphabet $[r]$, and $w\p$ the word obtained from $w$ by replacing each letter $i$ by $r+1-i$ and reversing the string. Then $\pi(w)=\pi(w\p)$ and (consequently) $[w] = [w\p]$.
\end{lemma}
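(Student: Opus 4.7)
My plan is to exhibit the operation $\phi : w \mapsto w'$ as a Markov chain isomorphism between the $\mbf{m}$-TASEP and the $\bar{\mbf{m}}$-TASEP, where $\bar{\mbf{m}} = (m_r, m_{r-1}, \dots, m_1)$ whenever $\mbf{m} = (m_1, \dots, m_r)$. Explicitly, $\phi(w)_j = r+1 - w_{n+1-j}$ for $1 \le j \le n$. It is immediate from this formula that $\phi$ is an involution, and that $\phi$ bijects words of type $\mbf{m}$ with words of type $\bar{\mbf{m}}$ (the count of $j$'s in $\phi(w)$ equals the count of $r+1-j$'s in $w$).

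The main step is to verify that $\phi$ intertwines the dynamics. Suppose $u \to v$ is a non-loop transition in the $\mbf{m}$-TASEP realized by the swap at position $i$, so that $u_i < u_{i-1}$ cyclically, $v_{i-1} = u_i$, $v_i = u_{i-1}$, and $v_k = u_k$ otherwise. Set $j = n+2-i$, read cyclically. Then $\phi(u)_{j-1} = r+1-u_i$ and $\phi(u)_j = r+1-u_{i-1}$, so the inequality $u_i < u_{i-1}$ is equivalent to $\phi(u)_j < \phi(u)_{j-1}$, i.e., a legal swap at position $j$ in $\phi(u)$. A direct check shows that performing this swap yields $\phi(v)$: reversing the string interchanges ``to the left'' with ``to the right'', while complementing the letters interchanges ``smaller'' with ``larger'', and the two inversions cancel. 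Each step therefore corresponds to a step in the other chain with the same probability $1/n$.

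Since $\phi$ is a bijective chain isomorphism with the $\bar{\mbf{m}}$-TASEP, which has its own unique stationary distribution, the pushforward of $\pi$ under $\phi$ must be that stationary distribution, giving $\pi(w) = \pi(w')$. For the bracket identity, combine the formula $Z_{\mbf{m}} = \prod_{i=1}^{r} {n \choose m_1+\dots+m_i}$ with ${n\choose k} = {n\choose n-k}$ to see that the two products $Z_{\mbf{m}}$ and $Z_{\bar{\mbf{m}}}$ contain the same binomial factors, so $Z_{\mbf{m}} = Z_{\bar{\mbf{m}}}$. Theorem \ref{mlq} then yields $[w] = Z_{\mbf{m}} \pi(w) = Z_{\bar{\mbf{m}}} \pi(w') = [w']$.

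The only subtlety is the cyclic index bookkeeping in the middle step (matching position $i$ in $u$ with position $n+2-i$ in $\phi(u)$), but since both transformations are order-reversing in the relevant sense, the verification is essentially mechanical. I do not see a more conceptual proof via a direct bijection on multi-line queues, because the MLQ labelling procedure inherently breaks the left-right symmetry of the circle.
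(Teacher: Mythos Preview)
Your argument is correct and is exactly the paper's approach spelled out in full: the paper's proof is the one-line observation that the TASEP can equivalently be described as ``larger letters jump right,'' which your explicit check that $\phi$ intertwines transitions makes rigorous. Your verification that $Z_{\mbf{m}} = Z_{\bar{\mbf{m}}}$ is in fact a detail the paper glosses over when passing from $\pi(w)=\pi(w')$ to $[w]=[w']$.
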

\begin{proof}
The TASEP can be alternatively described as "larger letters jump to the right if they swap with a smaller letter". This description is converted to the original one by the operation described.
\end{proof}

An interesting consequence of Theorem \ref{finish} is that the stationary probability of the sorted word of type $\mbf{m} = (m_1, \dots, m_r)$ depends only on the values but not on the order of the $m_i$. We have failed to find a probabilistic explanation of this fact.

The TASEP has recently been extended Lam and Williams (see \cite{LW}, also \cite{AM}, \cite{AL}) to an inhomogenous version where particles of class $i$ jump at rate $x_i$, for arbitrary numbers $x_1, \dots, x_{r-1}$. In a forthcoming paper we will prove corresponding identities for this version.

To describe those formulas, let $v_i = 1/x_i$ for each $i$, and $h_k$ be the homogenous symmetric polynomial of degree $k$

Then (after a multiplying by a suitable monomial factor), the corresponding formula for the sorted word of type $\mbf{m} = (m_1, \dots, m_r)$ is
$$
\prod_{j=1} ^r \left(\sum_{(t_1,\dots,t_j)} \left(\prod_{i=1}^{j-1} {m_i+t_i-1\choose m_i-1} v_i^{t_i}\right)v_j^{t_j} \right).
$$

Similarly (and with the same normalizing factor), the generalization of $Z_{\mbf{m}}$ is 
$$
\prod_{j=1} ^r \left(\sum_{(t_1,\dots,t_j)} \left(\prod_{i=1}^{j-1} {m_i+t_i-1\choose m_i-1} v_i^{t_i}\right){m_j+t_j\choose m_j}v_j^{t_j} \right).
$$

In both formulas, and for each $j$, the sum ranges over all $j$-tuples $(t_1,\dots,t_j)$ of nonnegative integers such that $t_1+\dots+t_j = n - (m_1 + \dots + m_j)$. These generalized formulas reduce to the ones above on letting $v_1 = \dots = v_{n-1} = 1$.
\vspace{0.5cm}

\noindent
{\bf Acknowledgement}

I thank my advisor Svante Linusson for suggesting the problem and for providing many helpful suggestions on an early version of this note.

\end{document}